\newtheorem{assumption}{Assumption}[section]
\newtheorem{theorem}{Theorem}
\newtheorem{remark}{Remark}[section]
\newtheorem{problem}{Problem}
\newtheorem{definition}{Definition}
\newtheorem{corollary}{Corollary}
\newtheorem{proposition}{Proposition}
\newcommand{\bremark}{\begin{remark} \begin{rm} }
\newcommand{\eremark}{ \end{rm} \rule{1mm}{2mm} \end{remark}}
\renewcommand{\t}{\mbox{\tiny\sf T}}
\newcommand{\R}{\mathbb{R}}
\begin{document}

\begin{frontmatter}

\title{\LARGE\bf Network games with dynamic players: Stabilization and output convergence to Nash equilibrium} 

\author{Meichen Guo}\ead{meichen.guo@rug.nl}
\author{Claudio De Persis}\ead{c.de.persis@rug.nl}

\address{ENTEG, Faculty of Science and Engineering, University of Groningen, Nijenborgh 4, 9747 AG Groningen, The Netherlands}

\begin{keyword}                           
Game theory; Nash equilibrium; distributed control; linear networks control; disturbance rejection.               
\end{keyword}                             

\begin{abstract}
  This paper addresses a class of network games played by dynamic agents using their outputs. Unlike most existing related works, the Nash equilibrium in this work is defined by functions of agent outputs instead of full agent states, which allows the agents to have more general and heterogeneous dynamics and maintain some privacy of their local states. The concerned network game is formulated with agents modeled by uncertain linear systems subject to external disturbances. The cost function of each agent is a linear quadratic function depending on the outputs of its own and its neighbors in the underlying graph. The main challenge stemming from this game formulation is that merely driving the agent outputs to the Nash equilibrium does not guarantee the stability of the agent dynamics. Using local output and the outputs from the neighbors of each agent, we aim at designing game strategies that achieve output Nash equilibrium seeking and stabilization of the closed-loop dynamics. Particularly, when each agents knows how the actions of its neighbors affect its cost function, a game strategy is developed for network games with digraph topology. When each agent is also allowed to exchange part of its compensator state, a distributed strategy can be designed for networks with connected undirected graphs or connected digraphs.
\end{abstract}

\end{frontmatter}

\thispagestyle{empty}

\section{Introduction}

Game theory has various applications to the control of multi-agent systems including smart grids, optical networks, and mobile sensor networks; see for example, \citet{Wang2014game_smartgrid,Stankovic2012,Pavel2006_opticalnetwork}. In these applications, the agents try to minimize local cost functions that depend on actions of their own and other players. The aim of the game is often set to seek a Nash Equilibrium (NE), \emph{i.e.}, no agent can gain by unilaterally changing its strategy. As far as we know, all NE seeking problems in literature use full agent states as decision variables and the sole purpose of the game strategy is to drive all the components of each agent state to the NE. These game theoretic problems have undesirable characters including the lack of privacy among agents and the restrictions on agent dynamics. First, as full agent states are used as decision variables, each agent has to know, or in the case of limited communication, observe the full states of all other agents. In this setting, it is impossible for the agents to converge to the NE while keeping some parts of their states unknown to the others. Second, in most existing works, the agents do not have independent dynamics such as in \citet{Ye2017TAC_NEseeking}, or they can have simple homogeneous dynamics such as in \cite{Romano}.
The recent work of \citet{Romano2019ECC} develops distributed NE seeking strategies for a class of heterogenous linear systems and defines the NE using partial local state. However, the agent dynamics therein take the special form of multi-integrators and at the defined NE, the  part of the local state that does not explicitly appear in the NE must be zero.
In summary, the existing NE seeking strategies are not applicable to engineering problems of multi-agent systems with general linear local dynamics. Motivated by these drawbacks in the existing works, we aim at solving an NE seeking problem such that, (i) if part of the local agent state is not directly involved in decision making, it can remain private from other agents; (ii) the agents can have more general and heterogeneous dynamics. However, it should be pointed out that this game formulation brings a major challenge that the outputs converging to the NE does not imply the stability of each agent. As we also assume that the agent states are not measurable, our goal is to design output feedback strategies that serve two purposes: stabilizing local dynamics and driving the outputs to the NE.
\\
\indent One important feature of NE seeking strategies in reference such as \citet{Lin2014CDC_formation,Parise2015_CDCnetworkgames,Koshal2016, Salehisadaghiani2016,Ye2017TAC_NEseeking,Deng2019Auto_EL, DePersis2019Auto_NEseeking,Romano} is that the designed strategies are distributed with respect to the underlying communication graph. The distributed game strategies proposed in \citet{Salehisadaghiani2016,Romano} exploit the underlying communication network topology so that each agent estimates the actions of others using information from its neighbors. Nonetheless, in general games, each agent needs information on the actions of all other agents to determine its own action. When the size of the network is large, the computational burden of each agent can be extremely heavy. This limitation leads to research on network games. In \citet{Parise2015_CDCnetworkgames}, distributed iterative strategies are proposed for agents to converge to an NE in network games with quadratic cost and convex constraints. \citet{Grammatico2018TCNS_networkgames} investigates the convergence of different equilibrium seeking proximal dynamics for multi-agent network games with convex cost functions, time-varying communication graph, and coupling constraints. Our work focuses on NE seeking in a class of network games, since in cooperative control of large networks, it can be computational difficult or unrealistic for each agent to consider the actions of all others. Specifically, the network games considered in this work is a class of quasi-aggregative games that each agent minimizes its cost function depending on its own actions and the actions of its neighbors in the underlying network topology. If the network is connected by a complete graph, the concerned network games become general or aggregative games such as the ones studied in \citet{Koshal2016,Salehisadaghiani2016,Deng2019Auto_EL, DePersis2019Auto_NEseeking, Romano}.

Another practical and important consideration in game theoretical engineering problems is the influence of external disturbances. In engineering applications, agents are often subject to external disturbances, for instance, the energy consumption demand in \citet{Wang2014game_smartgrid} and wind pushing the mobile robots in formation control in \citet{Romano}. However, designing NE seeking strategies capable of disturbance rejection has not gained much attention other than a few works such as \citet{Stankovic2012,Romano}. In this paper, the game is formulated with agents subject to deterministic disturbances generated by linear exosystems, which can be a combination of step functions and finitely many sinusoidal functions.

For a general or aggregative game setting, some recent works have investigated distributed NE seeking problems without or with simply local agent dynamics. \citet{Ye2017TAC_NEseeking} considers the NE seeking problem for agents communicating through an undirected and connected graph. Based on a consensus protocol and the gradient play approach, distributed NE seeking strategies are proposed for games with quadratic and nonquadratic cost functions. In the recent work \citet{Romano}, dynamic NE seeking strategies are proposed for single and double integrator networks subject to external disturbance modeled by linear exosystems. The cost function of each agent is a general convex cost function that depends on the actions of all the agents in the network. Under the assumption that the underlying communication graph is undirected and connected, dynamic strategies can be developed to estimate the actions of others and for the agents to converge to the unique Nash equilibrium. Reference \citet{Deng2019Auto_EL} investigates an aggregative game of Euler-Lagrange systems in the presence of uncertain parameters. The network topology is assumed to be undirected and connected, and the agents are free of external disturbances. As Euler-Lagrange systems are nonlinear second-order systems, the game strategies in \citet{Deng2019Auto_EL} are proposed based on double integrators.

The distributed robust NE seeking strategies proposed in this work are inspired by methods in linear output regulation. Compared with existing works on distributed NE seeking strategy design, this paper has the following contributions. First, we define the NE using outputs of general linear systems as decision variables, which leads to strategies having a wider range of engineering applications. Second, the designed NE seeking strategies are capable of not only outputs convergence to the NE but also stabilization of agent dynamics despite model uncertainties and external disturbances. Last but not least, the proposed strategies can be applied to networks with communication digraphs, which is not the case for most existing game strategies including the ones in \citet{Ye2017TAC_NEseeking,Romano,Deng2019Auto_EL}.

The rest of the paper is arranged as follows. The network game formulation and some preliminaries are given in Section~\ref{section_formulation}. The distributed NE seeking strategies are developed and analyzed in Section~\ref{section_mainresults}. In Section~\ref{section_simulation}, simulation results of the control of sensor networks are presented. Finally, some conclusive remarks are given in Section~\ref{section_conclusion}.

\emph{Notations.} $\R$ denotes the set of real numbers. $I_{n}$ denotes the $n\times n$ identity matrix. For vectors $x_{1},\dots,x_{N}$, $\text{col}(x_{1},\dots,x_{N})$ denotes $[x_{1}^{\t},\dots,x_{N}^{\t}]^{\t}$, and $x_{-i}$ denotes $[x_{1}^{\t},\dots,x_{i-1}^{\t},x_{i+1}^{\t},\dots,x_{N}^{\t}]^{\t}$. Given a matrix $A$, $\text{vec}(A)=[A_{1}~\dots~A_{l}]$ with $A_{i}$ being the $i$th row of $A$. Given matrices $A_{1},\dots,A_{N}$, $\text{blockdiag}(A_{1},\dots,A_{N})$ denotes the block diagonal matrix with $A_{i}$  on the diagonal.

\section{Game formulation and preliminaries}\label{section_formulation}

In this section, the formulation of the NE seeking problem for a class of network games will be presented. Then, we give some preliminaries on linear output regulation that will be used in the subsequent sections. It is shown that any controller that solves the output regulation problem of the reformulated agent dynamics also solves the NE seeking problem.

\subsection{Game formulation}
Consider a game denoted by $\mathcal{G}(\mathcal{I},J_{i},\Psi_{i})$ played by $N$ agents having dynamics
\begin{align}\label{playerdynamics_i}
\dot{x}_{i}&=A_{i}(\mu_{i})x_{i}+B_{i}(\mu_{i})u_{i}+P_{i}(\mu_{i})w_{i},\notag\\
y_{i}&=C_{i}(\mu_{i})x_{i},
\end{align}
for each $i\in\mathcal{I}:=\{1,\dots,N\}$, where $w_{i}\in\Theta_{i}\subseteq\R^{q_{i}}$ is the disturbance, $x_{i}\in\Phi_{i}\subseteq\R^{n_{i}}$ the state, $u_{i}\in\Omega_{i}\subseteq\R^{m_{i}}$ the input and strategy vector, $y_{i}\in\Psi_{i}\subseteq\R^{p_{i}}$ the output and decision variable, and denote $\mu\in\mathbb{R}^{n_{i}(n_{i}+m_{i}+q_{i}+{p_{i}})}$ as the uncertainty. The disturbance $w_{i}$ is generated by
\begin{align}\label{exosystem}
\dot{w}_{i}&=S_{i}w_{i},
\end{align}
where the initial value $w_{i}(0)\in\Theta_{i}\subseteq\R^{q_{i}}$ with $\Theta_{i}$ being forward-invariant. The uncertain matrices $A_{i}(\mu_{i})$, $B_{i}(\mu_{i})$, $P_{i}(\mu_{i})$, and $C_{i}(\mu_{i})$, $i\in\mathcal{I}$, are in the form of
\begin{align*}
A_{i}(\mu_{i})&=A_{i}+\Delta A_{i},~~B_{i}(\mu_{i})=B_{i}+\Delta B_{i},\\
P_{i}(\mu_{i})&=P_{i}+\Delta P_{i},~~C_{i}(\mu_{i})=C_{i}+\Delta C_{i},
\end{align*}
where $A_{i}$, $B_{i}$, $P_{i}$, $C_{i}$ are the known nominal parts and $\Delta A_{i}$, $\Delta B_{i}$, $\Delta P_{i}$, $\Delta C_{i}$ are the uncertain parts. The uncertainty for each agent $i$, $i\in\mathcal{I}$ can be written as $\mu_{i}=[\text{vec}(\Delta A_{i})~\text{vec}(\Delta B_{i})~\text{vec}(\Delta P_{i})~\text{vec}(\Delta C_{i})]^{\t}$. 

Define $G_{c} = (\mathcal{I},\mathcal{E})$ as the underlying communication graph of the network where $\mathcal{I}$ is the index set as defined above and $\mathcal{E}\subset \mathcal{I} \times \mathcal{I}$ is the edge set. There is an edge between nodes $i$ and $j$ if $(i,j) \in \mathcal{E}$, $i, j \in\mathcal{I}$. For an undirected graph, if $(i,j) \in \mathcal{E}$ then $(j,i) \in \mathcal{E}$. Denote the neighbor set of agent $i$ as $\mathcal{N}_{i} \subset \mathcal{I}$. For an undirected graph, if there exists a path between every pair of nodes, the undirected graph is connected. A digraph is weakly connected if there exists an undirected path between every pair of nodes, and is strongly connected if there exists a directed path between every pair of nodes.

Now, we give the definition of NE using outputs of (\ref{playerdynamics_i}) as decision variables in the network game $\mathcal{G}(\mathcal{I},J_{i},\Psi_{i})$. In the game, each agent tries to minimize a local cost function denoted by $J_{i}$, $i\in\mathcal{I}$.

\begin{definition}[NE in network games]\label{definition_NE} 
  Given a network game $\mathcal{G}(\mathcal{I},J_{i},\Psi_{i})$, the output $y^{*}:=\text{col}(y_{1}^{*},\dots,y_{N}^{*})$ is an NE of $\mathcal{G}$ if
  \begin{align*}
    J_{i}(y_{i}^{*},y_{\mathcal{N}_{i}}^{*})\le J_{i}(y_{i},y_{\mathcal{N}_{i}}^{*}).
  \end{align*}
\end{definition}

\bremark(\emph{NE in network and general games})
  Definition~\ref{definition_NE} becomes the same as the definition of NE in general or aggregative games such as \citet{Koshal2016,Salehisadaghiani2016,Deng2019Auto_EL, DePersis2019Auto_NEseeking, Romano} if $R_{ij}\ne 0$, $Q_{ij}\ne0$ for all $i,j\in\mathcal{I}$, \emph{i.e.}, $G_{c}$ is a complete graph.
\eremark

The local cost function concerned in this work is defined as
\begin{align}\label{localcostfunction}
J_{i}(y_{i},y_{-i})=y_{i}^{\t}R_{ii}y_{i}+Q_{ii}y_{i}+q_{i}+\sum_{j\in\mathcal{I},\;j\ne i}(y_{i}^{\t}R_{ij}y_{j}+y_{j}^{\t}Q_{ij}y_{j})
\end{align}
where $R_{ii}\in\mathbb{R}^{p_{i}\times p_{i}}$, $R_{ii}>0$ for $i\in\mathcal{I}$, $Q_{ii}\in\mathbb{R}^{1\times p_{i}}$, $q_{i}\in\R$, $R_{ij}\in\mathbb{R}^{p_{i}\times p_{j}}$, and $Q_{ij}\in\mathbb{R}^{p_{j}\times p_{j}}$ for $i,j\in\mathcal{I},~j\ne i$. Denote the neighbor set of agent $i$ as $\mathcal{N}_{i}$. Assume that $R_{ij}\ne 0$, $Q_{ij}\ne0$ for $j\in\mathcal{N}_{i}$, and $R_{ij}=0$, $Q_{ij}=0$ otherwise. Then, the cost function can also be written as $J_{i}(y_{i},y_{\mathcal{N}_{i}})$.

Note that since $R_{ii}>0$, for each $i\in\mathcal{I}$, $\Psi_{i}=\R^{p_{i}}$ and $\Phi_{i}=\R^{n_{i}}$, the local cost function $J_{i}$ is strictly convex and radially unbounded in $y_{i}$ for all $y_{\mathcal{N}_{i}}\in\Psi_{\mathcal{N}_{i}}$. Then according to \citet[Corollary 4.2]{Bacsar1999Book}, there exists an NE for network game $\mathcal{G}(\mathcal{I},J_{i},\Psi_{i})$.

Denote the partial gradient of cost function $J_{i}$ as $\nabla_{i}J_{i}(y_{i},y_{\mathcal{N}_{i}})=(\frac{\partial J_{i}}{\partial y_{i}})^{\t}$ and the pseudo-gradient as $F(y)=\text{col}(\nabla_{1}J_{1}(y_{1},y_{\mathcal{N}_{1}}),\dots,\nabla_{N}J_{N}(y_{N},y_{\mathcal{N}_{N}}))$. Then, the pseudo-gradient can be written as $F(y)=\bar{R}y+\bar{Q}$ where
\begin{align*}
\bar{R}=
  \begin{bmatrix}
    R_{11}+R_{11}^{\t} & R_{12} & \cdots & R_{1N} \\
    R_{21} & R_{22}+R_{22}^{\t} & \cdots & R_{2N} \\
    \vdots & \vdots & \ddots & \vdots \\
    R_{N1} & R_{N2} & \cdots & R_{NN}+R_{NN}^{\t}
  \end{bmatrix},~~
\bar{Q}=
  \begin{bmatrix}
    Q_{11}^{\t} \\
    Q_{22}^{\t} \\
    \vdots \\
    Q_{NN}^{\t}
  \end{bmatrix}.
\end{align*}

\begin{assumption}\label{assumption_monotone}
  The matrix $\bar{R}$ is positive definite.
\end{assumption}


Following 
\citet[Theorem 2.3.3]{Facchinei2003}, under Assumption~\ref{assumption_monotone}, the mapping $F$ is strictly monotone, and the game has a unique NE $y^{*}$ satisfying $F(y^{*})=\bar{R}y^{*}+\bar{Q}=0$.


\begin{remark}[Monotonicity of $F$]
Different assumptions on the monotonicity of mapping $F$ have been used in existing works to guarantee the uniqueness of the NE. For example, for general cost functions, \citet{Koshal2016,Salehisadaghiani2016} assume the mapping to be strictly monotone, and \citet{Deng2019Auto_EL,DePersis2019Auto_NEseeking} assume that the mapping is strongly monotone. Note that for general cost functions, the assumption of strict monotonicity is weaker than that of strong monotonicity. In this work, as the cost function of each agent is in the linear quadratic form (\ref{localcostfunction}), the strict monotonicity and the strong monotonicity of mapping $F$ are equivalent.
\end{remark}

Another assumption is made to exclude the trivial case where the disturbance $w_{i}$ for $i\in\mathcal{I}$ exponentially decays to $0$.

\begin{assumption}\label{assumption_exosyst}
  For each $i\in\mathcal{I}$, $S_{i}$ has no eigenvalues with negative real parts.
\end{assumption}

Now we give the formulation of the NE seeking problem for the network game $\mathcal{G}$.

\begin{problem}\label{problem_NEseeking}
Under Assumptions~\ref{assumption_monotone} to \ref{assumption_exosyst}, for the network game $\mathcal{G}(\mathcal{I},J_{i},\Psi_{i})$, use local partial gradient $e_{i}=\nabla_{i}J_{i}(y_{i},y_{\mathcal{N}_{i}})$, $i\in\mathcal{I}$ to design strategy $u_{i}$ such that

(i) the closed-loop dynamics of all agents are stable, and

(ii) the output $y:=\text{col}(y_{1},\dots,y_{N})$ converge to the NE $y^{*}$ that satisfies $\bar{R}y^{*}+\bar{Q}=0$.
\end{problem}

In the contrary to most existing works on NE seeking, the solution of Problem~\ref{problem_NEseeking} does not only need to drive the decision variables to the NE, but also guarantee the stability of the closed-loop systems. This objective is similar to output regulation problems where the aim of the regulator is to achieve reference tracking and/or disturbance rejection while guaranteeing that the closed-loop system is stable. In fact, we will get inspirations from output regulation for game strategies design in Section \ref{section_mainresults}.

\subsection{Preliminaries}
First, we write the dynamics of all the agents in a stacked form. Denote
\begin{align*}
  w&=\text{col}(w_{1},\dots,w_{N}),~~ x=\text{col}(x_{1},\dots,x_{N}),\\
  u&=\text{col}(u_{1},\dots,u_{N}),~~~\mu=\text{col}(\mu_{1},\dots,\mu_{N}),
\end{align*}
$\bar{m}=\sum_{i\in\mathcal{I}}m_{i}$, $\bar{n}=\sum_{i\in\mathcal{I}}n_{i}$, $\bar{p}=\sum_{i\in\mathcal{I}}p_{i}$, and $\bar{q}=\sum_{i\in\mathcal{I}}q_{i}$. Use the pseudo-gradient as the regulated error, {\it i.e.}, $e=F(y)=\bar{R}y+\bar{Q}$.

In order to write the regulated error $e$ as a function of the output $y$ and an exogenous signal, we construct a linear exosystem that consists of the disturbance generator (\ref{exosystem}) and a constant component. Specifically, for $i\in\mathcal{I}$, define $v_{i}=\text{col}(v_{i1},v_{i2})$ with initial condition $v_{i}(0)=\text{col}(w_{i}(0),1)$ satisfying
\begin{align}\label{exosystem_new}
 \dot{v}_{i}=\widetilde{S}_{i}v_{i}
 =\begin{bmatrix}
           S_{i} & 0 \\
           0 & 0
         \end{bmatrix}v_{i}.
\end{align}

Then, the dynamics of the $i$th agent can be written as
\begin{align}\label{playerdynamics_i_new}
\dot{x}_{i}&=A_{i}(\mu_{i})x_{i}+B_{i}(\mu_{i})u_{i} +\widetilde{P}_{i}(\mu_{i})v_{i},\notag\\
\dot{v}_{i}&=\widetilde{S}_{i}v_{i},\notag\\
e_{i}&=\bar{F}_{i}(x,v_{i}),
\end{align}
where $\widetilde{P}_{i}(\mu_{i})=[P_{i}(\mu_{i})~~0]$. The local regulated error $e_{i}$ is defined as the local partial gradient described by
\begin{align}
\bar{F}_{i}(x,v_{i})=(R_{ii}+R_{ii}^{\t})C_{i}(\mu_{i})x_{i}+\sum_{j\in\mathcal{N}} R_{ij}C_{j}(\mu_{i})x_{j}+\widetilde{Q}_{ii}v_{i}
\end{align}
with $\widetilde{Q}_{ii}=[0_{1\times q_{i}}~~Q_{ii}^{\t}]$.

\bremark(\emph{Extended exosystem})
  In exosystem (\ref{exosystem_new}), $v_{i1}$ is the same as $w_{i}$ in (\ref{exosystem}) and $v_{i2}$ is constant $1$. Then, we can use $Q_{ii}^{\t}v_{i2}$ to replace $Q_{ii}^{\t}$ in the expression of the local partial gradient $e_{i}$. It should be pointed out that, adding a constant component $v_{i2}$ is necessary and does not complicate the strategy design. If (\ref{exosystem}) already has a constant component, \emph{i.e.} $S_{i}$ has an eigenvalue at $0$, the addition of $v_{i2}$ will not change the subsequent design of the internal model. On the other hand, if $S_{i}$ has no eigenvalue at $0$, it is necessary to take the constant $v_{i2}$ into consideration. We will show later that for the special case where the agents are not subject to external disturbances, due to the constant vector $Q_{ii}$ in the cost function, the controller needs to contain an integrator, which is the internal model for constant exogenous signals.
\eremark

We denote $(\mathbf{x}_{i},\mathbf{u}_{i})$ for each $i\in\mathcal{I}$ as the steady state of (\ref{playerdynamics_i_new}) such that
\begin{align}\label{steady_state_equation}
0 & = A_{i}(\mu_{i})\mathbf{x}_{i}+B_{i}(\mu_{i})\mathbf{u}_{i} +\widetilde{P}_{i}(\mu_{i})v_{i},\notag\\
0 & = \bar{F}_{i}(\mathbf{x},v_{i}),
\end{align}
and the corresponding steady-state output as $\mathbf{y}_{i}=C_{i}(\mu_{i})\mathbf{x}_{i}$ where $\mathbf{x}=\text{col}(\mathbf{x}_{1},\dots,\mathbf{x}_{N})$. Then, seeing $e_{i}$ as the local regulated error, the distributed cooperative output regulation problem of (\ref{playerdynamics_i_new}) is to design a controller using $e_{i}$ such that $(x_{i},u_{i})$ converge to $(\mathbf{x}_{i},\mathbf{u}_{i})$ for all $i\in\mathcal{I}$. In fact, the solution to the cooperative output regulation problem also solves the NE seeking Problem~\ref{problem_NEseeking}.

\begin{proposition}\label{proposition}
Under Assumptions~\ref{assumption_monotone} to \ref{assumption_exosyst}, if there exists $(\mathbf{x}_{i},\mathbf{u}_{i})$ such that (\ref{steady_state_equation}) holds for all $i\in\mathcal{I}$, the corresponding $\mathbf{y}=\text{col}(\mathbf{y}_{1},\dots,\mathbf{y}_{N})$ is the NE of the network game $\mathcal{G}(\mathcal{I},J_{i},\Psi_{i})$.
\end{proposition}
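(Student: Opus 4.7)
The plan is to show that the second line of (\ref{steady_state_equation}), namely $\bar{F}_i(\mathbf{x},v_i)=0$, is precisely the first-order optimality condition $\nabla_i J_i(\mathbf{y}_i,\mathbf{y}_{\mathcal{N}_i})=0$ for each $i$, and then to invoke the strict convexity of $J_i$ in its first argument to deduce the NE inequality in Definition~\ref{definition_NE}.

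First I would rewrite $\bar{F}_i(\mathbf{x},v_i)$ in terms of the steady-state outputs. Using $\mathbf{y}_j=C_j(\mu_j)\mathbf{x}_j$ and the fact that by construction $v_{i2}(t)\equiv 1$ (so $\widetilde{Q}_{ii}v_i=Q_{ii}^{\t}$), the expression for $\bar{F}_i$ reduces to $(R_{ii}+R_{ii}^{\t})\mathbf{y}_i+\sum_{j\in\mathcal{N}_i}R_{ij}\mathbf{y}_j+Q_{ii}^{\t}$. Next I would differentiate the local cost (\ref{localcostfunction}) directly with respect to $y_i$, noting that the quadratic cross-terms $y_j^{\t}Q_{ij}y_j$ do not involve $y_i$, and that $R_{ij}=0$ for $j\notin\mathcal{N}_i$. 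The resulting partial gradient matches the rewritten $\bar{F}_i$ exactly, so $\bar{F}_i(\mathbf{x},v_i)=\nabla_i J_i(\mathbf{y}_i,\mathbf{y}_{\mathcal{N}_i})$. Stacking over $i\in\mathcal{I}$ gives the equivalent global identity $F(\mathbf{y})=\bar{R}\mathbf{y}+\bar{Q}=0$.

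To conclude, I would offer two equivalent closures. The short route cites the consequence of Assumption~\ref{assumption_monotone} already recorded after that assumption: $F$ is strictly monotone and the unique NE $y^*$ is characterized by $\bar{R}y^*+\bar{Q}=0$, so $\mathbf{y}=y^*$. The more self-contained route appeals to the fact that $R_{ii}>0$ makes $y_i\mapsto J_i(y_i,\mathbf{y}_{\mathcal{N}_i})$ strictly convex and radially unbounded on $\Psi_i=\R^{p_i}$; vanishing of the partial gradient at $\mathbf{y}_i$ then makes $\mathbf{y}_i$ its unique global minimizer, which is exactly the inequality $J_i(\mathbf{y}_i,\mathbf{y}_{\mathcal{N}_i})\le J_i(y_i,\mathbf{y}_{\mathcal{N}_i})$ required by Definition~\ref{definition_NE}.

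There is no real obstacle here: the proof is essentially a bookkeeping check that the algebraic regulator equation (\ref{steady_state_equation}) encodes the game-theoretic stationarity condition. The only place where care is needed is the role of the appended constant component $v_{i2}$, which turns $Q_{ii}^{\t}$ into the exogenous term $\widetilde{Q}_{ii}v_i$ inside $\bar{F}_i$; once this identification is made, strict convexity does the rest.
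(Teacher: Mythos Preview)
Your argument is correct. In fact, the paper does not supply an explicit proof of this proposition; it is stated and then used without further justification, presumably because the authors regard it as immediate from the surrounding discussion (the remark after Assumption~\ref{assumption_monotone} that the unique NE is characterized by $\bar{R}y^*+\bar{Q}=0$, together with the observation that $e_i=\bar{F}_i(x,v_i)$ equals $\nabla_iJ_i$). Your write-up makes this precise: you identify $\bar{F}_i(\mathbf{x},v_i)$ with $\nabla_iJ_i(\mathbf{y}_i,\mathbf{y}_{\mathcal{N}_i})$ via $\mathbf{y}_j=C_j(\mu_j)\mathbf{x}_j$ and $\widetilde{Q}_{ii}v_i=Q_{ii}^\t$, and then close either by the uniqueness statement $\bar{R}y^*+\bar{Q}=0$ or by strict convexity of $J_i$ in $y_i$. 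Both closures are valid, and the first is exactly what the paper's own exposition sets up.
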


Now, our aim is to design distributed controllers for cooperative output regulation of (\ref{playerdynamics_i_new}). To this end, some standard assumptions in the framework of linear output regulation will be given first. In what follows, we denote
\begin{align*}
\bar{A}(\mu)& = \text{blockdiag}(A_{1}(\mu_{1}),\dots,A_{N}(\mu_{N})), \\
\bar{B}(\mu)& = \text{blockdiag}(B_{1}(\mu_{1}),\dots,B_{N}(\mu_{N})), \\
\bar{C}(\mu)& = \text{blockdiag}(C_{1}(\mu_{1}),\dots,C_{N}(\mu_{N})), \\
\bar{P}(\mu)& = \text{blockdiag}(\widetilde{P}_{1}(\mu_{1}),\dots,\widetilde{P}_{N}(\mu_{N})),\\
\widehat{Q}& = \text{blockdiag}(\widetilde{Q}_{11},\dots,\widetilde{Q}_{NN}).
\end{align*}
Also, the nominal parts of $\bar{A}(\mu)$, $\bar{B}(\mu)$, $\bar{C}(\mu)$, and $\bar{P}(\mu)$ are denoted by $\bar{A}$, $\bar{B}$, $\bar{C}$, and $\bar{P}$, respectively.

\begin{assumption}\label{assumption_stabilizability&detectability}
  For each $i\in\mathcal{I}$, $(A_{i},B_{i})$ is stabilizable and $(A_{i},C_{i})$ is detectable.
\end{assumption}
\bremark(\emph{Overall network stabilizability and detectability})
  As matrices $R_{ii}$ are positive definite for all $i\in\mathcal{I}$, the detectability of pair $(A_{i},C_{i})$ is equivalent to the detectability of the pair $(A_{i},(R_{ii}+R_{ii}^{\t})C_{i})$. Moreover, since the dynamics of the agents are decoupled, the pair $(\bar{A},\bar{B})$ is stabilizable, $(\bar{A},\bar{C})$ and $(\bar{A},\bar{R}\bar{C})$ are detectable.
\eremark

\begin{assumption}\label{assumption_minimumphase}
  For all $\lambda\in\text{spec}(S_{i})\bigcup\{0\}$, $i\in\mathcal{I}$,
  \begin{align}\label{equation_miniphaserank}
    \text{rank}
    \begin{bmatrix}
      A_{i}-\lambda I & B_{i} \\
      C_{i} & 0
    \end{bmatrix}=n_{i}+p_{i}.
  \end{align}
\end{assumption}

\bremark(\emph{Existence of steady state})
Assumption~\ref{assumption_minimumphase} guarantees the existence of the steady state $(\mathbf{x}_{i},\mathbf{u}_{i})$ \citet[Theorem 1.9]{Huang2004}. The appendix presents some conditions guaranteed by Assumption~\ref{assumption_minimumphase} that will be used in subsequent sections.
\eremark

To reject the disturbance generated by exosystem (\ref{exosystem_new}) and handle the uncertainties in the agent dynamics, an internal model is constructed for each agent $i$, $i\in\mathcal{I}$. The following is a general definition of an internal model given an exosystem in the form $\dot{w}=Sw$.

\begin{definition}[Internal model]\citet[Definition 1.25]{Huang2004}\label{definition_IM} 
  Given any square matrix ${S}$, a pair of matrices $(M_{1},M_{2})$ incorporates a $p$-copy internal model of ${S}$ if
  \begin{align*}
    M_{1}=V\begin{bmatrix}
             T_{1} & T_{2} \\
             0 & G_{1}
           \end{bmatrix}V^{-1},~~
    M_{2}=V\begin{bmatrix}
             T_{3} \\
             G_{2}
           \end{bmatrix}
  \end{align*}
  where $(T_{1},T_{2},T_{3})$ are arbitrary constant matrices of any compatible dimensions, $V$ is any nonsingular matrix with the same dimension as $M_{1}$ and
  \begin{align*}
    G_{1}=\text{blockdiag}(\beta_{1},\dots,\beta_{p}), ~~G_{2}=\text{blockdiag}(\sigma_{1},\dots,\sigma_{p}),
  \end{align*}
  where $\beta_{i}$ are square matrices and $\sigma_{i}$ are column vectors with appropriate dimensions, $(\beta_{i},\sigma_{i})$ are controllable pairs, and the characteristic polynomials of $\beta_{i}$ are the same as the minimal polynomial of ${S}$ for all $i=1,\dots,p$.
\end{definition}

\bremark 
  By Definition~\ref{definition_IM}, as a special case of $(M_{1},M_{2})$, the pair of $(G_{1},G_{2})$ also incorporates a $p$-copy internal model of matrix $S$.
\eremark

\bremark (\emph{``p-copy'' Internal model})
Under Definition~\ref{definition_IM}, the dimension of the internal model is the dimension of the output times the order of the minimal polynomial of $S$. The internal model design in \citet{Isidori2003book} has the same dimension but does not use the term ``$p$-copy''.
\eremark

\section{Distributed game strategies}\label{section_mainresults}
In this section, two distributed output feedback control strategies are developed for the NE seeking problem. We first consider the case where the agents are connected by a diagraph without loops. Then, by letting the agents communicate more information with their neighbors, we relax the assumption on the information graph.

\subsection{Directed communication graph}
Using the internal model approach, a distributed error feedback strategy is designed in the form of
\begin{align}\label{controller_distributed1}
  \dot{\eta}_{i} & = M_{i1}\eta_{i}+M_{i2}e_{i},\notag\\
  u_{i} & = K_{i}\eta_{i},
\end{align}
where $\eta_{i}\in\mathbb{R}^{(n_{i}+p_{i}s_{i})}$, $K_{i}=[K_{i1}~~K_{i2}]$ is the gain matrix,
\begin{align*}
  M_{i1}=\begin{bmatrix}
    A_{i}+B_{i}K_{i1}-L_{i}(R_{ii}+R_{ii}^{\t})C_{i} & B_{i}K_{i2} \\
    0_{p_{i}s_{i}\times n_{i}} & G_{i1}
  \end{bmatrix},~~
  M_{i2}=\begin{bmatrix}
           L_{i} \\
           G_{i2}
         \end{bmatrix},
\end{align*}
$s_{i}$ is the order of the minimal polynomial of $\widetilde{S}_{i}$, $L_{i}$ is such that $A_{i}-L_{i}(R_{ii}+R_{ii}^{\t})C_{i}$ is Hurwitz, and $(G_{i1},G_{i2})$ incorporates a $p_{i}$-copy internal model of matrix $\widetilde{S}_{i}$. By Definition~\ref{definition_IM}, the pair $(M_{i1},M_{i2})$ also incorporates a $p_{i}$-copy internal model of matrix $\widetilde{S}_{i}$.

Before presenting the main result, we make the following assumption on the graph.
\begin{assumption}\label{assumption_graph}
  The communication graph among the agents is a digraph with no loop.
\end{assumption}

  In most existing works on distributed NE seeking strategy for games, for instance \citet{Salehisadaghiani2016,Ye2017TAC_NEseeking, Godjov2018TAC,Deng2019Auto_EL,DePersis2019Auto_NEseeking,Romano}, the network topology is assumed to be undirected and connected. As far as we know, the strategies proposed in the aforementioned works are not applicable to games with directed communication graphs. However, it is noted that the controller (\ref{controller_distributed1}) cannot solve NE seeking problem with undirected communication graphs. A distributed strategy that is able to handle both directed and undirected communication graphs will be presented in the next subsection by allowing the neighboring agents to exchange some additional information.

\bremark(\emph{State privacy in (\ref{controller_distributed1})}) \label{remark_privacy}
In controller (\ref{controller_distributed1}), each agent $i$, $i\in\mathcal{I}$, exchanges output $y_{i}$ with its neighbors. Note that its neighbors are not able to reconstruct the full state $x_{i}$ of agent $i$ only using this output. In the case where $C_{i}\ne I_{n_{i}}$ and $p_{i}\ne n_{i}$, at least part of the agent state can remain private from its neighbors.
\eremark

Now we are ready to present the main result of this subsection.

\begin{theorem}[Distributed strategy under communication digraphs]\label{theorem_digraph}
  Under Assumptions~\ref{assumption_stabilizability&detectability}, \ref{assumption_minimumphase}, and \ref{assumption_graph}, the distributed strategy (\ref{controller_distributed1}) is a solution to the NE seeking Problem~\ref{problem_NEseeking}.
\end{theorem}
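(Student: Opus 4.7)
The plan is to first invoke Proposition~\ref{proposition}, which reduces the NE seeking problem to a cooperative output regulation problem for the reformulated dynamics (\ref{playerdynamics_i_new}) with regulated error $e_i$. Existence of the steady state $(\mathbf{x}_i,\mathbf{u}_i)$ solving (\ref{steady_state_equation}) is granted by Assumption~\ref{assumption_minimumphase} (see the remark on existence of steady state), and standard linear regulator theory writes it as $\mathbf{x}_i=X_i v_i$, $\mathbf{u}_i=U_i v_i$ with $X_i,U_i$ solving a Sylvester-type regulator equation driven by $\widetilde{S}_i$.

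Next I would construct a steady-state trajectory $\bar{\eta}_i(t)$ of the controller state. Because $(G_{i1},G_{i2})$ incorporates a $p_i$-copy internal model of $\widetilde{S}_i$, the internal-model principle yields a matrix $\Xi_i$ such that $\Xi_i \widetilde{S}_i = G_{i1}\Xi_i$ and the feedforward input satisfies $U_i = K_{i2}\Xi_i$ for an appropriate $K_{i2}$. Combined with $X_i v_i$ in the ``observer'' block, this provides a profile $\bar{\eta}_i(t)$ that the controller sustains autonomously whenever $e_i=0$. Shifting to $\tilde{x}_i := x_i-X_i v_i$ and $\tilde{\eta}_i := \eta_i-\bar{\eta}_i$ cancels the exogenous forcing, leaving the autonomous per-agent error dynamics
\begin{align*}
\dot{\tilde{x}}_i &= A_i \tilde{x}_i + B_i K_i \tilde{\eta}_i, \\
\dot{\tilde{\eta}}_i &= M_{i1}\tilde{\eta}_i + M_{i2}\Big((R_{ii}+R_{ii}^{\t})C_i \tilde{x}_i + \sum_{j\in\mathcal{N}_i} R_{ij} C_j \tilde{x}_j\Big).
\end{align*}

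The third step is to expose a triangular structure inside each agent. Partition $\tilde{\eta}_i = \text{col}(z_i,\xi_i)$ in agreement with the block form of $M_{i1}$ and perform the observer-like substitution $\bar{z}_i := z_i - \tilde{x}_i$. Then $\dot{\bar{z}}_i = (A_i-L_i(R_{ii}+R_{ii}^{\t})C_i)\bar{z}_i - L_i\sum_{j\in\mathcal{N}_i} R_{ij} C_j \tilde{x}_j$, which is Hurwitz by the design of $L_i$. The remaining $(\tilde{x}_i,\xi_i)$ subsystem is precisely the closed loop of $(A_i,B_i)$ augmented with the internal model $(G_{i1},G_{i2})$; under Assumptions~\ref{assumption_stabilizability&detectability} and \ref{assumption_minimumphase} this augmented pair is stabilizable (a standard fact of linear output regulation), so $K_{i1},K_{i2}$ can be chosen to make it Hurwitz. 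Hence, \emph{in the absence of neighbor coupling}, each agent's error dynamics are exponentially stable.

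The main obstacle is exactly this neighbor coupling, and it is handled using Assumption~\ref{assumption_graph}. Since the cost-function graph is an acyclic digraph, the agents admit a topological ordering in which $j\in\mathcal{N}_i$ forces $j$ to precede $i$. Source agents, having $\mathcal{N}_i=\emptyset$, already fall into the previous step and their errors decay exponentially. An induction along the topological order then shows that every downstream agent receives only exponentially vanishing inputs from its neighbors, and, since its own internal dynamics are Hurwitz, its errors decay exponentially as well. Thus $\tilde{x}_i,\tilde{\eta}_i\to 0$ exponentially for all $i$, yielding closed-loop stability and, via Proposition~\ref{proposition}, output convergence to the unique NE $y^{*}$. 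Robustness to the uncertainties $\mu_i$ is then a standard continuity argument: the stabilizing gains are designed from the nominal model, so small enough $\mu_i$ preserves the Hurwitz property of the error system, while the $p_i$-copy structure continues to guarantee asymptotic rejection of the exosignals generated by $\widetilde{S}_i$.
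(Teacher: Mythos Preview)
Your proposal is correct and follows essentially the same route as the paper: both reduce to output regulation via Proposition~\ref{proposition}, use acyclicity (Assumption~\ref{assumption_graph}) to obtain a block lower-triangular closed loop whose diagonal blocks are rendered Hurwitz by the observer change of variables together with stabilizability of the internal-model-augmented pair, and then invoke the internal-model property plus continuity in $\mu$ for the regulator-equation and robustness parts. The only slip is that the steady-state feedforward should read $U_i = K_{i1}X_i + K_{i2}\Xi_i$ rather than $K_{i2}\Xi_i$ alone, and it is $\Xi_i$ (not $K_{i2}$) that the internal-model property furnishes once the stabilizing gain is fixed; the paper sidesteps this bookkeeping by appealing directly to Lemma~1.27 and Theorem~1.31 of \citet{Huang2004} on the stacked matrix $A_c$, whereas you carry out the equivalent argument explicitly via the error shift and cascade induction.
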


\begin{proof}
Denote $z_{i}=\text{col}(x_{i},\eta_{i})$, and $z=\text{col}(z_{1},\dots,z_{N})$. The closed-loop system can be written as
\begin{align}\label{closed-loop syst}
  \dot{z} & = A_{c}(\mu)z+P_{c}(\mu)v,\notag\\
  \dot{v} & = \widehat{S}v,\notag\\
  e & = C_{c}(\mu)z+Q_{c}v
\end{align}
where $v=\text{col}(v_{1},\dots,v_{N})$ and $\widehat{S}=\text{blockdiag}(\widetilde{S}_{1},\dots,\widetilde{S}_{N})$.
We use $A_{c}$, $P_{c}$ and $C_{c}$ to denote the closed-loop system composed of the nominal dynamics and the controller (\ref{controller_distributed1}), where $A_{c}$ is a block matrix with diagonal blocks $A_{ci}$ and off diagonal blocks $E_{ij}$, $C_{c}$ is a block matrix with blocks $\widetilde{C}_{ij}$ for $i,j\in\mathcal{I}$, $i\ne j$, $P_{c}=\bar{P}$, $Q_{c}=\widehat{Q}$ and
\begin{align*}
A_{ci}&=
   \begin{bmatrix}
     A_{i} & B_{i}K_{i1} & B_{i}K_{i2} \\
     L_{i}(R_{ii}+R_{ii}^{\t})C_{i} & A_{i}+ B_{i}K_{i1}-L_{i}(R_{ii}+R_{ii}^{\t})C_{i} & B_{i}K_{i2} \\
     G_{i2}(R_{ii}+R_{ii}^{\t})C_{i} & 0_{v_{i}\times n_{i}} & G_{i1}
   \end{bmatrix},\\
E_{ij}&=
   \begin{bmatrix}
     0_{n_{i}\times n_{j}} & 0_{n_{i}\times n_{j}} & 0_{n_{i}\times v_{j}} \\
     L_{i}R_{ij}C_{j} & 0_{n_{i}\times n_{j}} & 0_{n_{i}\times v_{j}} \\
     G_{i2}R_{ij}C_{j} & 0_{v_{i}\times n_{j}} & 0_{v_{i}\times v_{j}}
   \end{bmatrix},\\
\widetilde{C}_{ij}&=
\begin{cases}
     [(R_{ii}+R_{ii}^{\t})C_{i} ~~ 0_{n_{i}\times n_{i}} ~~ 0_{n_{i}\times v_{i}}], & \mbox{if } i=j \\
     [R_{ij}C_{j} ~~ 0_{n_{i}\times n_{j}} ~~ 0_{n_{i}\times v_{j}}], & \mbox{otherwise}.
\end{cases}
\end{align*}
According to \citet[Theorem 1.31]{Huang2004}, under Assumptions~\ref{assumption_exosyst} and \ref{assumption_stabilizability&detectability}, there exists a dynamic output feedback controller in the form of (\ref{controller_distributed1}) such that the closed-loop system is stable and the error $e$ converges to $0$ asymptotically, if and only if matrix $A_{c}$ is Hurwitz, and the regulator equations
\begin{align}\label{regulator equations}
    Z\widehat{S} &= A_{c}(\mu)Z+P_{c}(\mu),\notag \\
    0 &= C_{c}(\mu)Z+Q_{c},
\end{align}
have a unique solution $Z$ for any $\mu$ in an open neighborhood of $\mu=0$.


First, we examine the stability of the nominal matrix $A_{c}$. Under Assumption~\ref{assumption_graph}, we can label the agents such that $i<j$ if $(i,j)\in\mathcal{E}$ for $i,j\in\mathcal{I}$. Then, $A_{c}$ becomes a block lower triangular matrix. For each $i\in\mathcal{I}$, the diagonal $A_{ci}$ is similar to the matrix
\begin{align*}
\bar{A}_{ci}=
  \begin{bmatrix}
    A_{i}+B_{i}K_{i1} & B_{i}K_{i1} & B_{i}K_{i2} \\
    0_{n_{i}\times n_{i}} & A_{i}-L_{i}(R_{ii}+R_{ii}^{\t})C_{i} & 0_{n_{i}\times v_{i}}\\
    G_{i2}(R_{ii}+R_{ii}^{\t})C_{i} & 0_{v_{i}\times n_{i}} & G_{i1}
  \end{bmatrix}.
\end{align*}
Note that there exists $L_{i}$ for $i\in\mathcal{I}$ such that $A_{i}-L_{i}(R_{ii}+R_{ii}^{\t})C_{i}$ is Hurwitz, as the pair $(A_{i},(R_{ii}+R_{ii}^{\t})C_{i})$ is detectable. Under Assumption~\ref{assumption_minimumphase} and by the definition of the internal model, we have that for all $\lambda\in\text{spec}(G_{i1})$,
\begin{align}\label{equation_minimunphase}
  \text{rank}\begin{bmatrix}
        A_{i}-\lambda I & B_{i} \\
        (R_{ii}+R_{ii}^{\t})C_{i} & 0_{p_{i}\times m_{i}}
      \end{bmatrix} =n_{i}+p_{i}.
\end{align}
Then, according to \citet[Lemma 1.26]{Huang2004}, under Assumptions~\ref{assumption_exosyst} and \ref{assumption_stabilizability&detectability}, the pair
\begin{align*}
  \Big{(}\begin{bmatrix}
        A_{i} & 0_{n_{i}\times v_{i}} \\
        G_{i2}(R_{ii}+R_{ii}^{\t})C_{i} & G_{i1}
      \end{bmatrix},
      \begin{bmatrix}
        B_{i} \\
        0_{v_{i}\times m_{i}}
      \end{bmatrix}\Big{)}
\end{align*}
is stabilizable. Thus, there exists control gain $[K_{i1}~K_{i2}]$ such that
\begin{align*}
  \begin{bmatrix}
    A_{i} & 0_{n_{i}\times v_{i}} \\
    G_{i2}(R_{ii}+R_{ii}^{\t})C_{i} & G_{i1}
  \end{bmatrix}+
  \begin{bmatrix}
        B_{i} \\
        0_{v_{i}\times m_{i}}
  \end{bmatrix}&
  \begin{bmatrix}
       K_{i1} & K_{i2}
  \end{bmatrix}\\
  =&\begin{bmatrix}
    A_{i}+B_{i}K_{i1} & B_{i}K_{i2} \\
    G_{i2}(R_{ii}+R_{ii}^{\t})C_{i} & G_{i1}
  \end{bmatrix}
\end{align*}
is Hurwitz. Then, for $i\in\mathcal{I}$, $\bar{A}_{ci}$, and consequently $A_{ci}$, are Hurwitz, which shows the diagonal blocks of $A_{c}$ are all Hurwitz. As $A_{c}$ is a block lower triangular matrix, it is also Hurwitz.

Next, we show there exists a unique solution $Z$ to the regulator equations (\ref{regulator equations}). As $A_{c}$ is Hurwitz, according to \citet[Lemma 1.27]{Huang2004}, the equations
\begin{align}\label{regulation equations nominal1}
X\widehat{S} &= \bar{A}X+\bar{B}\bar{K}\Xi+\bar{P},\notag\\
\Xi\widehat{S} &= \bar{M}_{1}\Xi+\bar{M}_{2}(\bar{R}\bar{C}X+\bar{Q}),\notag\\
0 &= \bar{R}\bar{C}X+\bar{Q}
\end{align}
has a unique solution $(X,\Xi)$ for any matrices $\bar{P}$ and $\bar{Q}$, where
\begin{align*}
\bar{K}&=\text{blockdiag}(K_{1},\dots,K_{N}),\\
\bar{M}_{1}&=\text{blockdiag}(M_{11},\dots,M_{N1}),\\
\bar{M}_{2}&=\text{blockdiag}(M_{12},\dots,M_{N2}),
\end{align*}
Note that the matrix equations (\ref{regulation equations nominal1}) can be put into the form of
\begin{align}\label{regulation equations nominal2}
    X_{c}\widehat{S} = A_{c}X_{c}+P_{c},\quad
    0 = C_{c}X_{c}+Q_{c},
\end{align}
and the solvability of (\ref{regulation equations nominal2}) means the solvability of regulator equations (\ref{regulator equations}) for any $\mu$ in any open neighborhood of $\mu=0$. Therefore, by \citet[Lemma 1.20]{Huang2004}, the distributed controller (\ref{controller_distributed1}) solves the output regulation problem. Then, following Proposition~\ref{proposition}, controller (\ref{controller_distributed1})  also solves the NE seeking Problem~\ref{problem_NEseeking}.
\end{proof}

For the case where the agents are free of disturbances or subject to constant external disturbances, the exosystem (\ref{exosystem_new}) satisfies $\text{spec}(\widetilde{S}_{i})=\{0\}$. Then, the controller (\ref{controller_distributed1}) can be simplified. The distributed output feedback controller for disturbance-free NE seeking problem can be derived directly from Theorem~\ref{theorem_digraph}.

\begin{corollary}\label{corollary1}
  Consider an NE seeking problem defined in Problem~\ref{problem_NEseeking} with agent dynamics
\begin{align}\label{playerdynamics_i_nodisturbance}
\dot{x}_{i}&=A_{i}(\mu_{i})x_{i}+B_{i}(\mu_{i})u_{i},\notag\\
y_{i}&=C_{i}(\mu_{i})x_{i},
\end{align}
and cost functions (\ref{localcostfunction}) for all $i\in\mathcal{I}$. Under Assumptions \ref{assumption_stabilizability&detectability}, \ref{assumption_minimumphase} and \ref{assumption_graph}, there exist matrices $L_{i}$, $K_{i1}$ and $K_{i2}$, such that the NE seeking problem has a solution in the form of (\ref{controller_distributed1}) with
\begin{align*}
  M_{i1}=\begin{bmatrix}
    A_{i}+B_{i}K_{i1}-L_{i}(R_{ii}+R_{ii}^{\t})C_{i} & B_{i}K_{i2} \\
    0_{p_{i}\times n_{i}} & 0_{p_{i}\times p_{i}}
  \end{bmatrix},~~
  M_{i2}=\begin{bmatrix}
           L_{i} \\
           I_{p_{i}}
         \end{bmatrix}.
\end{align*}
\end{corollary}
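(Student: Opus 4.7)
The plan is to derive Corollary~\ref{corollary1} as a direct specialization of Theorem~\ref{theorem_digraph} to the case where the disturbance $w_i$ is absent. First I would note that, without $w_i$, the extended exosystem construction in~(\ref{exosystem_new}) reduces to its scalar constant component alone, so $v_i \equiv 1$ and $\widetilde{S}_i = 0 \in \mathbb{R}^{1\times 1}$. Importantly, an internal model is still required because the term $Q_{ii}^{\t}$ in the cost function injects a nonzero constant into the regulated error $e_i$. The minimal polynomial of $\widetilde{S}_i$ is $\lambda$, so its order is $s_i = 1$, which matches the dimension of the second block of $\eta_i$ used in the corollary.

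Next I would build an explicit $p_i$-copy internal model of $\widetilde{S}_i$ in the sense of Definition~\ref{definition_IM}. Choosing $\beta_j = 0$ and $\sigma_j = 1$ for $j = 1,\dots,p_i$ yields $G_{i1} = 0_{p_i\times p_i}$ and $G_{i2} = I_{p_i}$. Each pair $(\beta_j,\sigma_j) = (0,1)$ is trivially controllable, and its characteristic polynomial $\lambda$ coincides with the minimal polynomial of $\widetilde{S}_i$; hence Definition~\ref{definition_IM} is satisfied. Substituting this $(G_{i1},G_{i2})$ into the generic template for $(M_{i1},M_{i2})$ from~(\ref{controller_distributed1}) produces exactly the block expressions stated in the corollary.

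Finally I would verify that the hypotheses of Theorem~\ref{theorem_digraph} remain satisfied by the reformulated system. Assumption~\ref{assumption_stabilizability&detectability} and Assumption~\ref{assumption_graph} are part of the corollary's hypothesis. Assumption~\ref{assumption_exosyst} becomes vacuous since $\widetilde{S}_i = 0$ has its unique eigenvalue at zero, whose real part is nonnegative. Assumption~\ref{assumption_minimumphase} needs to be checked only at $\lambda \in \mathrm{spec}(\widetilde{S}_i)\cup\{0\} = \{0\}$, which is exactly what is assumed. The monotonicity condition carried over from Problem~\ref{problem_NEseeking} is unchanged. Consequently, Theorem~\ref{theorem_digraph} applies and the resulting controller solves the NE seeking problem via Proposition~\ref{proposition}.

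The only subtle step is confirming that $(0_{p_i\times p_i},I_{p_i})$ is a legitimate $p_i$-copy internal model of the trivial exosystem; this is immediate once one recognizes it as $p_i$ independent scalar blocks $(0,1)$ with the correct characteristic polynomial. Everything else is bookkeeping that instantiates the proof of Theorem~\ref{theorem_digraph}.
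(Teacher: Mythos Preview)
Your proposal is correct and follows exactly the route the paper intends: the paper itself states that Corollary~\ref{corollary1} ``can be derived directly from Theorem~\ref{theorem_digraph}'' by noting that $\text{spec}(\widetilde{S}_i)=\{0\}$ and choosing the $p_i$-copy internal model $(G_{i1},G_{i2})=(0_{p_i\times p_i},I_{p_i})$. Your verification that $(0,1)$ is a controllable pair with characteristic polynomial $\lambda$, and your check that the remaining hypotheses of Theorem~\ref{theorem_digraph} are satisfied in the disturbance-free setting, fill in precisely the details the paper leaves implicit.
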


\bremark (\emph{Integrator in the strategy})
  When the agents are not affected by external disturbances, we can design the $p_{i}$-copy internal model $(G_{i1},G_{i2})$ as $(0_{p_{i}\times p_{i}},I_{p_{i}})$ for each agent $i$, $i\in\mathcal{I}$. Note that in this case, it is still necessary to include an integrator in the controller. This is because the steady-state output $\mathbf{y}$ satisfying $\bar{R}\mathbf{y}+\bar{Q}=0$ depends on the constant matrix $\bar{Q}$. If the agents have dynamics  (\ref{playerdynamics_i}) and the disturbance $w_{i}$ is a constant vector, the NE seeking strategy can use the same design as shown in Corollary \ref{corollary1}, as $\text{spec}(\widetilde{S}_{i})=\{0\}$ still holds. 
\eremark

\subsection{General communication graph}

Assumption~\ref{assumption_graph} can be relaxed if the distributed game strategy is designed as
\begin{align}\label{controller_distributed2}
\dot{\xi}_{i}&=A_{i}\xi_{i}+B_{i}u_{i}-L_{i}(\hat{e}_{i}-e_{i}),\notag\\
\dot{\zeta}_{i}&=G_{i1}\zeta_{i}+G_{i2}e_{i},\notag\\
u_{i} &= K_{i1}\xi_{i}+K_{i2}\zeta_{i}
\end{align}
where $\hat{e}_{i}=(R_{ii}+R_{ii}^{\t})C_{i}\xi_{i}+\sum_{j\in\mathcal{N}_{i}}R_{ij}C_{j}\xi_{j}$, matrix $L_{i}$ and the pair $(G_{i1},G_{i2})$ have the same definitions as in (\ref{controller_distributed1}). The difference between (\ref{controller_distributed2}) and (\ref{controller_distributed1}) is that in (\ref{controller_distributed2}) agents exchange $C_{i}\xi_{i}$ with neighbors. To rule out the case where the network contains isolated agents solving an optimization problem instead of playing games with neighbors, we have the following assumption on the communication graph.

\begin{assumption}\label{assumption_graph2}
  The communication graph among the agents is connected.
\end{assumption}

  Under Assumption~\ref{assumption_graph2}, the network topology can be a connected undirected graph, or a weakly or strongly connected digraph.

\begin{theorem}[Distributed strategy under connected communication graphs]\label{theorem2}
  Under Assumptions~\ref{assumption_stabilizability&detectability}, \ref{assumption_minimumphase}, and \ref{assumption_graph2}, distributed strategy (\ref{controller_distributed2}) is a solution to the NE seeking Problem~\ref{problem_NEseeking}.
\end{theorem}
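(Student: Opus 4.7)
The plan is to mirror the strategy of Theorem~\ref{theorem_digraph}: reduce the NE seeking problem to a cooperative output regulation problem via Proposition~\ref{proposition}, and then verify (i) Hurwitzness of the nominal closed-loop matrix $A_{c}$ and (ii) solvability of the regulator equations~(\ref{regulator equations}) in a neighborhood of $\mu=0$. The novelty relative to Theorem~\ref{theorem_digraph} is that $\xi_{i}$ now plays the role of an observer for $x_{i}$ driven by the exchanged quantities $C_{j}\xi_{j}$, so the triangular structure of $A_{c}$ will no longer be produced by an ordering of the graph.

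First I would change coordinates by setting $\tilde{x}=x-\xi$. A direct substitution yields $\hat{e}-e=-\bar{R}\bar{C}\tilde{x}-\widehat{Q}v$, so the $\tilde{x}$-subsystem decouples and the nominal closed-loop matrix in $(\tilde{x},\xi,\zeta)$-coordinates becomes
\begin{align*}
A_{c}=\begin{bmatrix}\bar{A}-\bar{L}\bar{R}\bar{C} & 0 & 0\\ \bar{L}\bar{R}\bar{C} & \bar{A}+\bar{B}\bar{K}_{1} & \bar{B}\bar{K}_{2}\\ \bar{G}_{2}\bar{R}\bar{C} & \bar{G}_{2}\bar{R}\bar{C} & \bar{G}_{1}\end{bmatrix},
\end{align*}
which is block lower triangular. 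The bottom-right block can be rendered Hurwitz by choosing $\bar{K}_{1},\bar{K}_{2}$ with the same augmented-stabilizability argument used in Theorem~\ref{theorem_digraph} (Assumption~\ref{assumption_stabilizability&detectability}, the rank condition of Assumption~\ref{assumption_minimumphase}, and \citet[Lemma~1.26]{Huang2004}).

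The main obstacle will be the top-left block $\bar{A}-\bar{L}\bar{R}\bar{C}$: the gain $\bar{L}=\text{blockdiag}(L_{i})$ is block diagonal while $\bar{R}$ carries cross-agent entries $R_{ij}$, so the local Hurwitzness of $A_{i}-L_{i}(R_{ii}+R_{ii}^{\t})C_{i}$ does not by itself suffice. My plan is to exploit Assumption~\ref{assumption_monotone}, which implies $\bar{R}+\bar{R}^{\t}\ge\lambda I$ for some $\lambda>0$, together with the detectability of each $(A_{i},C_{i})$ noted in the remark after Assumption~\ref{assumption_stabilizability&detectability}. For each $i$ fix $P_{i}\succ 0$ from the observer Lyapunov inequality for $(A_{i},C_{i})$, and set $L_{i}=\alpha P_{i}^{-1}C_{i}^{\t}$ with a scalar high gain $\alpha>0$. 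With $V=\tilde{x}^{\t}P\tilde{x}$ and $P=\text{blockdiag}(P_{i})$, the relation $P\bar{L}=\alpha\bar{C}^{\t}$ causes the cross terms to telescope and gives
\begin{align*}
\dot{V}=\tilde{x}^{\t}\bigl[P\bar{A}+\bar{A}^{\t}P-\alpha\,\bar{C}^{\t}(\bar{R}+\bar{R}^{\t})\bar{C}\bigr]\tilde{x}.
\end{align*}
Since $\alpha\bar{C}^{\t}(\bar{R}+\bar{R}^{\t})\bar{C}\ge\alpha\lambda\,\text{blockdiag}(C_{i}^{\t}C_{i})$, the required sign condition reduces to the block-diagonal one $P_{i}A_{i}+A_{i}^{\t}P_{i}<\alpha\lambda C_{i}^{\t}C_{i}$, and detectability of each $(A_{i},C_{i})$ makes this hold for all sufficiently large $\alpha$; hence $\bar{A}-\bar{L}\bar{R}\bar{C}$ is Hurwitz.

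Once $A_{c}$ is Hurwitz, I would close the argument exactly as in Theorem~\ref{theorem_digraph}: split the nominal regulator equations into a state part $X\widehat{S}=\bar{A}X+\bar{B}\bar{K}\Xi+\bar{P}$ and an internal-model part, obtain the unique nominal solution via \citet[Lemma~1.27]{Huang2004}, lift solvability to a neighborhood of $\mu=0$ via \citet[Lemma~1.20]{Huang2004}, and finally invoke Proposition~\ref{proposition} to conclude that the outputs converge to the NE $y^{*}$.
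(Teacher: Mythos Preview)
Your plan is the paper's own argument: write the stacked closed-loop matrix, observe that it has exactly the form of the single-agent block $A_{ci}$ from the proof of Theorem~\ref{theorem_digraph} with $(A_i,B_i,C_i,R_{ii}+R_{ii}^{\t})$ replaced by $(\bar A,\bar B,\bar C,\bar R)$, expose the separation structure via $\tilde x=x-\xi$, and then invoke \citet[Lemmas~1.26--1.27]{Huang2004} together with Proposition~\ref{proposition}. The paper's proof is in fact terser than yours: it simply records the formal analogy with $A_{ci}$ and says ``under the assumption that $\bar R$ is positive definite, it can be proved in the same fashion,'' relying on the aggregate detectability of $(\bar A,\bar R\bar C)$ and the aggregate rank condition in the Appendix without further comment.

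Where you genuinely go beyond the paper is the observer block $\bar A-\bar L\bar R\bar C$. You correctly flag that $\bar L=\text{blockdiag}(L_i)$ is structured while $\bar R$ is not, and you supply a concrete high-gain construction $L_i=\alpha P_i^{-1}C_i^{\t}$ exploiting Assumption~\ref{assumption_monotone}; this step is sound (detectability of $(A_i,C_i)$ does yield $P_i\succ0$ with $P_iA_i+A_i^{\t}P_i<C_i^{\t}C_i$, e.g.\ via the filter Riccati equation), and the paper provides no counterpart to it. Note, though, that you thereby prove existence of suitable $L_i$, not that \emph{every} $L_i$ rendering $A_i-L_i(R_{ii}+R_{ii}^{\t})C_i$ Hurwitz works, which is how the controller description reads.

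One caveat that applies equally to the paper: your appeal to ``the same augmented-stabilizability argument'' for the lower-right block
\[
\begin{bmatrix}\bar A+\bar B\bar K_1 & \bar B\bar K_2\\ \bar G_2\bar R\bar C & \bar G_1\end{bmatrix}
\]
yields, via \citet[Lemma~1.26]{Huang2004} and the Appendix rank condition, only a \emph{centralized} stabilizing $\bar K$; the cross-agent coupling $\bar G_2\bar R\bar C$ means the per-agent argument of Theorem~\ref{theorem_digraph} no longer applies directly, and neither you nor the paper explains why a block-diagonal $\bar K$ suffices. If you want a fully self-contained distributed statement, this block needs a companion argument to your observer step.
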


\begin{proof}
Denoting $\xi=\text{col}(\xi_{1},\dots,\xi_{N})$, $\zeta=\text{col}(\zeta_{1},\dots,\zeta_{N})$, and $z=\text{col}(x,\xi,\zeta)$ gives the system matrix of the nominal closed-loop system as
\begin{align*}
A_{c}=\begin{bmatrix}
\bar{A} & \bar{B}\bar{K}_{1} & \bar{B}\bar{K}_{2}\\
\bar{L}\bar{R}\bar{C} & \bar{A}+\bar{B}\bar{K}_{1}-\bar{L}\bar{R}\bar{C} & \bar{B}\bar{K}_{2}\\
\bar{G}_{2}\bar{R}\bar{C} & 0 & \bar{G}_{1}
\end{bmatrix}.
\end{align*}
Note that $A_{c}$ here is in the same form as $A_{ci}$ in the proof of Theorem~\ref{theorem_digraph}. Therefore, under the assumption that $\bar{R}$ is positive definite, it can be proved in the same fashion that $A_{c}$ is Hurwitz and the regulator equations have a unique solution. Then, applying \citet[Theorem 1.31]{Huang2004}, we can prove that there exists a dynamic output feedback controller in the form of (\ref{controller_distributed2}) that stabilizes the closed-loop system and drives the error $e$ to zero asymptotically. Hence, following Proposition~\ref{proposition}, (\ref{controller_distributed2}) also solves the NE seeking problem.
\end{proof}

  By allowing the agents to exchange the additional information $C_{i}\xi_{i}$ with neighbors, we can relax the assumption on the communication graph in Theorem~\ref{theorem_digraph}. In (\ref{controller_distributed2}), the $\xi_{i}$-subsystem can be seen as an observer for the state $x_{i}$ and the $\zeta_{i}$-subsystem is the internal model for the exosystem (\ref{exosystem_new}).
  As the agents exchange $C_{i}\xi_{i}$ instead of full state estimation $\xi_{i}$, similar to the arguments in Remark~\ref{remark_privacy} the agents maintain some privacy.

\medskip

Similar to the previous subsection, we can derive a corollary for a disturbance-free NE seeking problem under general communication graph from Theorem \ref{theorem2}.

\begin{corollary}
  Consider an NE seeking problem with agent dynamics (\ref{playerdynamics_i_nodisturbance}) and cost functions (\ref{localcostfunction}) for all $i\in\mathcal{I}$. Under Assumptions~\ref{assumption_stabilizability&detectability} and \ref{assumption_minimumphase}, there exists matrices $L_{i}$, $K_{i1}$ and $K_{i2}$, such that the NE seeking problem has a solution in the form of
\begin{align}\label{controller_nodisturbance}
\dot{\xi}_{i}&=A_{i}\xi_{i}+B_{i}u_{i}-L_{i}(\hat{e}_{i}-e_{i}),\notag\\
\dot{\zeta}_{i}&=e_{i},\notag\\
u_{i} &= K_{i1}\xi_{i}+K_{i2}\zeta_{i}.
\end{align}
\end{corollary}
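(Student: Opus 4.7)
The plan is to derive this corollary as a direct specialization of Theorem~\ref{theorem2} to the disturbance-free setting, so the work reduces almost entirely to bookkeeping on the extended exosystem. When the agents have dynamics (\ref{playerdynamics_i_nodisturbance}), there is no $w_i$ to model, and the extended exosystem (\ref{exosystem_new}) collapses to the constant component $v_{i2}$ alone, with $\widetilde{S}_i = 0$. This matrix has its only eigenvalue at the origin, so Assumption~\ref{assumption_exosyst} is trivially satisfied, and the rank condition of Assumption~\ref{assumption_minimumphase} collapses to a check at $\lambda = 0$, which is already assumed.

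The next step is to specialize the internal model. Since the minimal polynomial of $\widetilde{S}_i = 0$ is $\lambda$, Definition~\ref{definition_IM} admits the choice $(G_{i1},G_{i2}) = (0_{p_i\times p_i},I_{p_i})$ as a valid $p_i$-copy internal model. Substituting this into the $\zeta_i$-dynamics of (\ref{controller_distributed2}) gives exactly $\dot{\zeta}_i = e_i$, recovering the controller (\ref{controller_nodisturbance}). The $\xi_i$-dynamics and the output equation for $u_i$ are unchanged from (\ref{controller_distributed2}), so the candidate controller is structurally identical to the instance of (\ref{controller_distributed2}) obtained from this choice of internal model.

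Having made these identifications, I would simply invoke Theorem~\ref{theorem2}: under Assumptions~\ref{assumption_stabilizability&detectability} and \ref{assumption_minimumphase} (and the connectivity assumption on the underlying graph, as in Theorem~\ref{theorem2}), the construction used there produces gains $L_i,K_{i1},K_{i2}$ rendering the closed-loop $A_c$ Hurwitz and yielding a unique solution to the regulator equations; then Proposition~\ref{proposition} concludes that $y$ converges to the unique NE $y^*$ satisfying $\bar{R}y^* + \bar{Q} = 0$. No new stability or solvability arguments are needed — the block structure of $A_c$ is the same one already treated in the proof of Theorem~\ref{theorem2}, with $\bar{G}_1 = 0$ and $\bar{G}_2 = I$.

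The only conceptually nontrivial point, and the one I would emphasize, is the same subtlety raised in the remark after Corollary~\ref{corollary1}: although there is no disturbance to reject, the integrator in the $\zeta_i$-subsystem cannot be removed, because the NE $y^*$ is determined by the inhomogeneous equation $\bar{R}y^*+\bar{Q}=0$ and the constant vector $\bar{Q}$ plays the role of a constant exogenous signal that must still be internally modelled. Verifying that $(0,I)$ genuinely realizes the internal model for this constant component is what guarantees zero steady-state tracking error on $e = \bar{R}y + \bar{Q}$, and hence output convergence to the NE. Everything else is a mechanical specialization of Theorem~\ref{theorem2}.
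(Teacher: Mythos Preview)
Your proposal is correct and mirrors the paper's own reasoning: the paper does not give an explicit proof of this corollary but simply remarks that it is derived from Theorem~\ref{theorem2} in the same way Corollary~\ref{corollary1} is derived from Theorem~\ref{theorem_digraph}, via the specialization $\widetilde{S}_i=0$ and the internal-model choice $(G_{i1},G_{i2})=(0_{p_i\times p_i},I_{p_i})$. Your observation that the graph connectivity assumption (Assumption~\ref{assumption_graph2}) is implicitly needed, even though the corollary's statement omits it, is also apt.
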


\section{Simulation results}\label{section_simulation}
In this section, the proposed NE seeking strategies are applied to connectivity control of sensor networks studied in \citet{Stankovic2012}. The network is composed of mobile robot agents to be positioned at optimal sensing points while keeping good connections with selected neighboring agents. In this example, we consider mobile robots modelled by
\begin{align}
\dot{x}_{i1}=x_{i2},\quad \dot{x}_{i2}=-c_{i}x_{i2}+u_{i}+w_{i}, \quad i\in\mathcal{I}
\end{align}
where $x_{i1}$ and $x_{i2}$ denote the position and velocity of each agent, respectively, $c_{i}>0$ is the friction parameter. The decision variable is the position of each agent, \emph{i.e.}, $y_{i}=x_{i1}$. The cost function is defined as
\begin{align}
  J_{i} = \|y_{i}-r_{i}\|^2+\sum_{j\in\mathcal{N}_{i}}\|y_{i}-y_{j}\|^2,
\end{align}
for each $i\in\mathcal{I}$, where $r_{i}$ is the objective position of each agent. Then, by converging to the NE, the agents compromise between the individual objective of moving to position $r_{i}$ and the collective objective of maintaining the connectivity with their neighbors.

In the simulation, the sensor network is composed of $5$ mobile robots subject to disturbance generated by the exosystem
\begin{align*}
 \dot{w}_{i}=\begin{bmatrix}
 0 & \pi/10 \\ -\pi/10 & 0
 \end{bmatrix}w_{i},\quad i\in\mathcal{I}
\end{align*}
which is a class of sinusoid signals having frequency $\pi/10$. The friction parameter is set as $c_{i}=0.2$. The initial positions of the agents are $(0,0)$, $(1,1)$, $(1,-1)$, $(2,1)$, $(2,-1)$, respectively.

\begin{figure}
  \centering
  \includegraphics[width=0.2\textwidth]{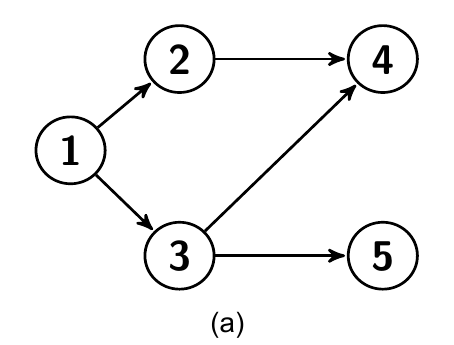}
  \includegraphics[width=0.2\textwidth]{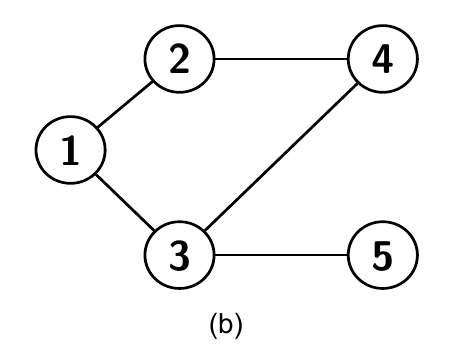}
  \caption{The information graphs of the sensor networks.}\label{figure_graph}
\end{figure}

First, we consider a network connected by a digraph illustrated in (a) of Figure~\ref{figure_graph}. The objective positions for the agents are set at $r_{1}=[-1~0]^{\t}$, $r_{2}=[1~-1]^{\t}$, $r_{3}=[2~-1]^{\t}$, $r_{4}=[-1~2]^{\t}$, and $r_{5}=[-2~2]^{\t}$. To keep connectivity with their neighbors, the agents will converge to the NE $y^{*}=[-1~0~0~-0.5~0.5~-0.5~-0.166~0.333~-0.75~-1.25]^{\t}$. Applying the distributed strategy (\ref{controller_distributed1}) in Theorem~\ref{theorem_digraph}, the robot agents can converge to the NE as illustrated in Figures~\ref{figure_trajectories1} and \ref{figure_errors1}. In Figure \ref{figure_trajectories1}, the initial positions of the agents are denoted by circles and the NE is denoted by a collection of crosses. Figure~\ref{figure_errors1} illustrates the regulated errors $e_{i}$ of the agents in $x$ and $y$ axes.

\begin{figure}
  \centering
  \includegraphics[width=0.45\textwidth]{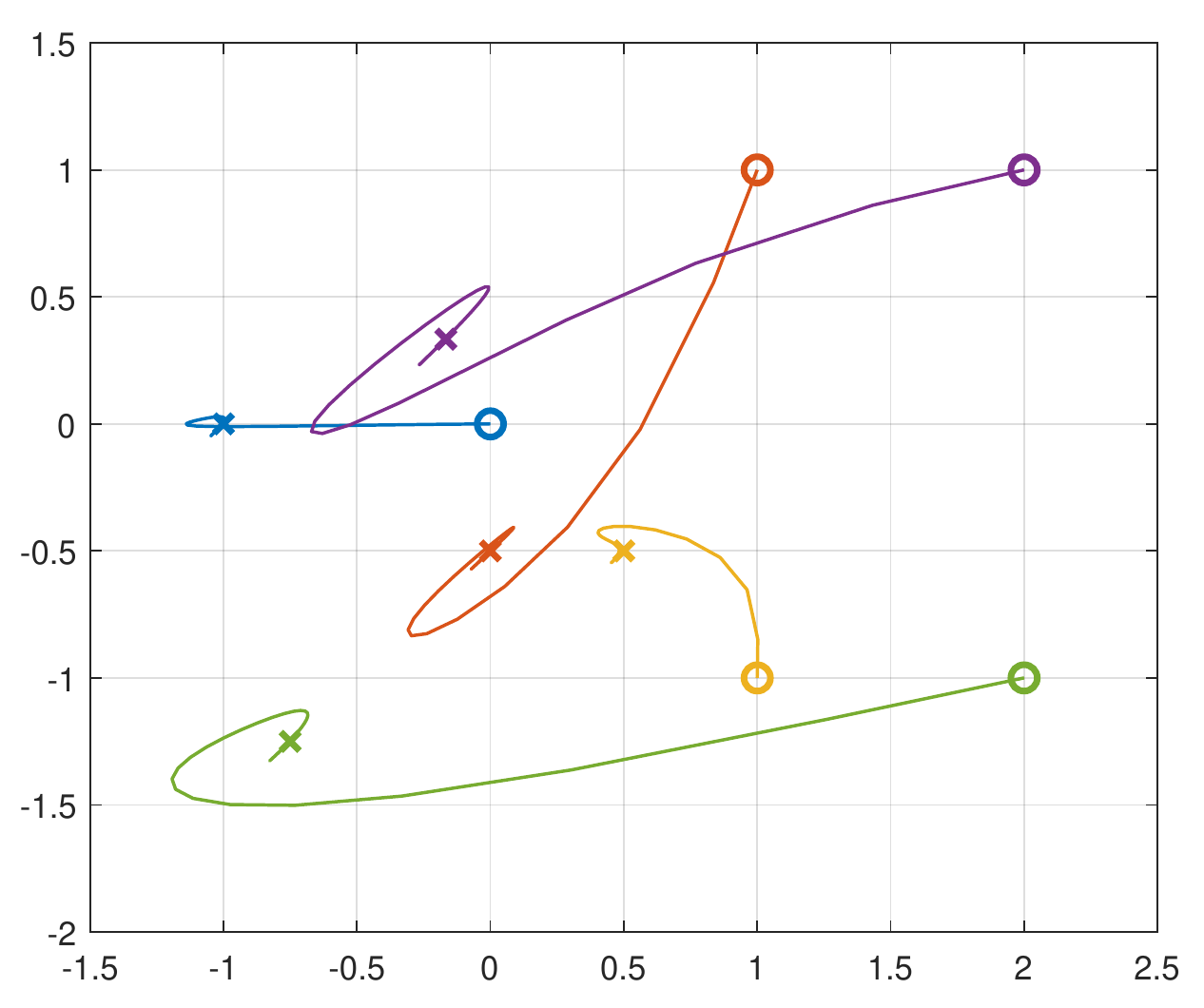}
  \caption{Trajectories of the agents under digraph.}\label{figure_trajectories1}
\end{figure}

\begin{figure}
  \centering
  \includegraphics[width=0.45\textwidth]{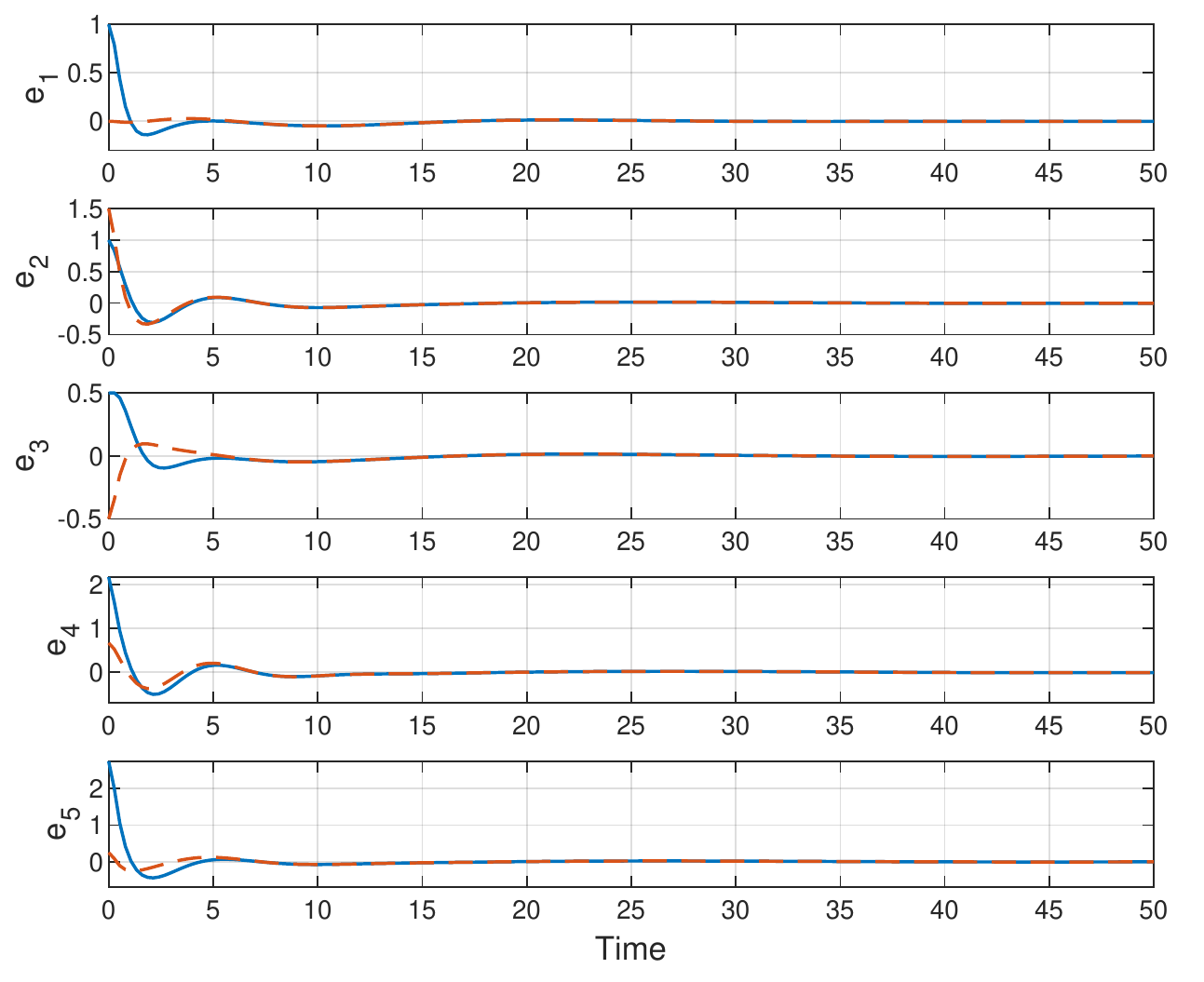}
  \caption{Errors of the agents under digraph.}\label{figure_errors1}
\end{figure}

The second case is when the underlying graph of the agents is undirected as illustrated in Figure~\ref{figure_graph}(b). The objective positions for the agents are set as the same in the previous case, and then the NE becomes $y^{*}=[-0.459~-0.133~0.204~-0.527~0.580~-0.130~-0.071~0.449~-0.709~-1.064]^{\t}$. The simulation results of applying distributed strategy (\ref{controller_distributed2}) to the NE seeking problem are illustrated in Figures~\ref{figure_trajectories2} and \ref{figure_errors2}.

\begin{figure}
  \centering
  \includegraphics[width=0.45\textwidth]{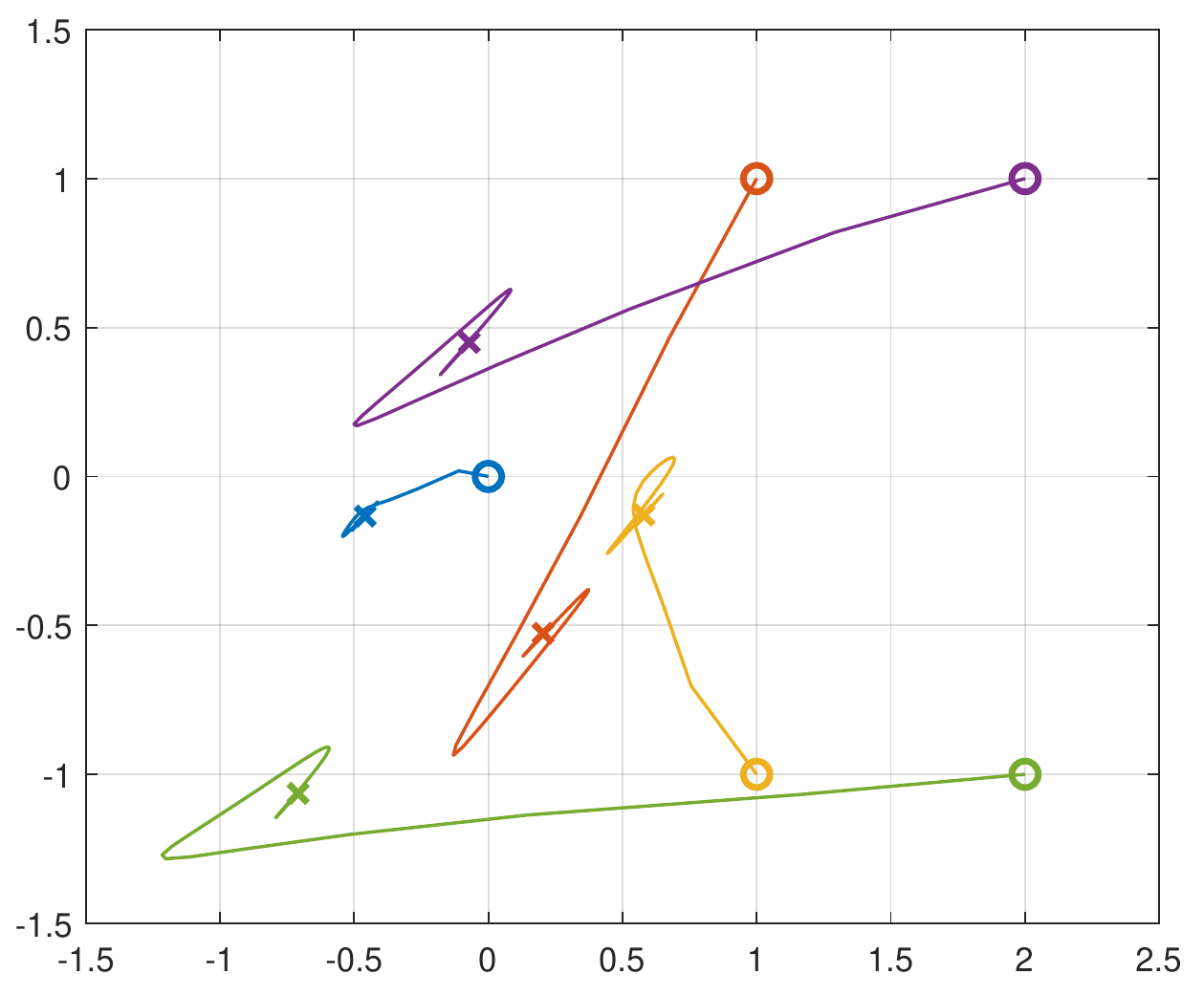}
  \caption{Trajectories of the agents under undirected graph.}\label{figure_trajectories2}
\end{figure}

\begin{figure}
  \centering
  \includegraphics[width=0.45\textwidth]{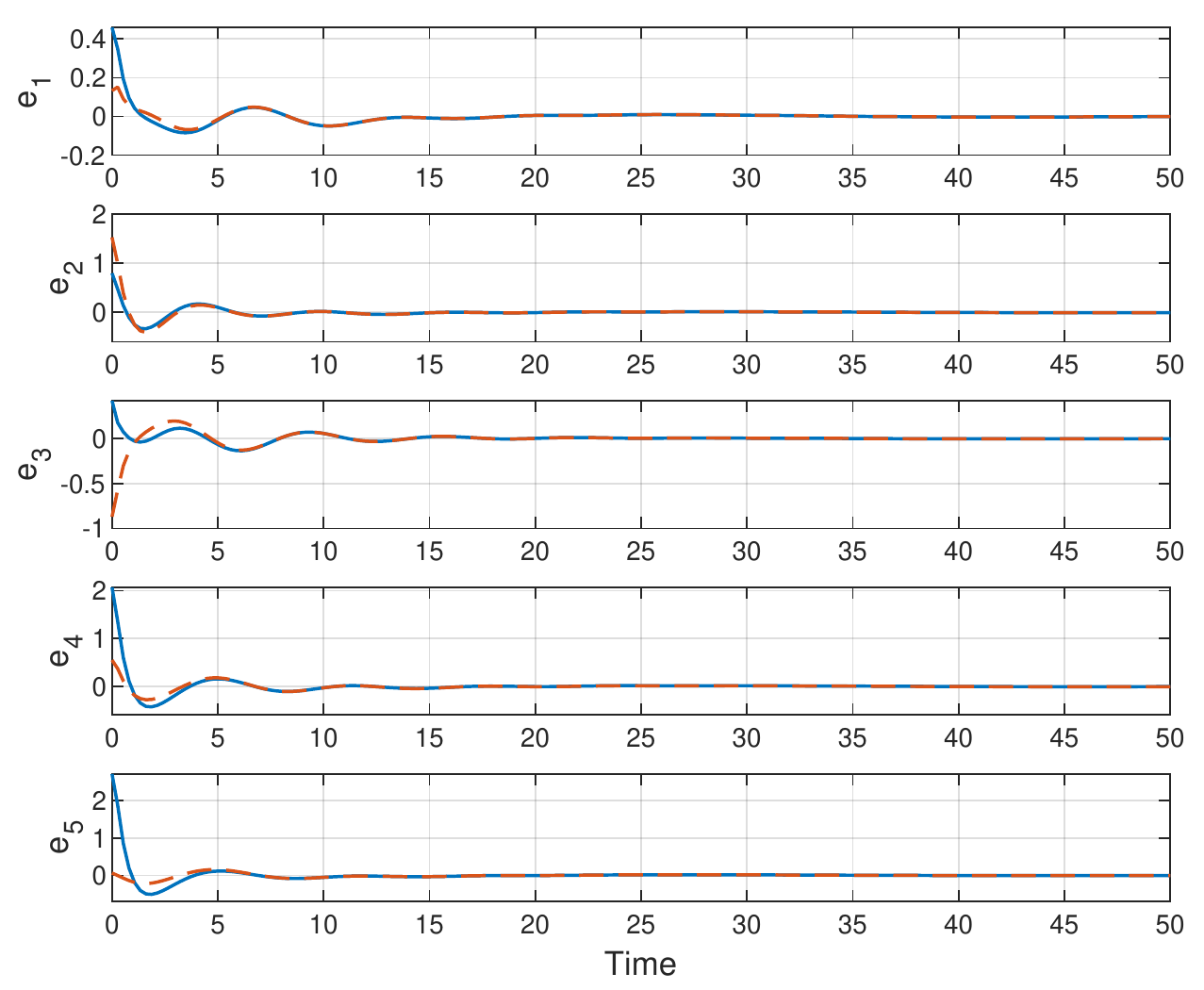}
  \caption{Errors of the agents under undirected graph.}\label{figure_errors2}
\end{figure}

\section{Conclusion}\label{section_conclusion}
Motivated by engineering applications, this paper considers output NE seeking problems in a class of network games with uncertain linear agent dynamics subject to external disturbances. The main challenge is to design a strategy capable of both driving the agent outputs to the NE and stabilizing the closed-loop dynamics. Other difficulties stem from the uncertainties in the dynamics, the external disturbance, and the relaxed assumptions on the communication graphs. By over coming these difficulties, our proposed game strategies have a wider range of  applications to engineering multi-agent problems. Future work may consider similar output network games with general cost functions and/or nonlinear agent dynamics.

\section{Appendix}
\begin{appendices}
\emph{Some insights regarding Assumption~\ref{assumption_minimumphase}}

Under Assumption~\ref{assumption_minimumphase}, it can be proved that for any nonsingular matrix $D_{i}\in\mathbb{R}^{p_{i}\times{p_{i}}}$ and any $\lambda\in\text{spec}(S_{i})\bigcup\{0\}$,
  \begin{align*}
    \text{rank}
    \begin{bmatrix}
      A_{i}-\lambda I & B_{i} \\
      D_{i}C_{i} & 0
    \end{bmatrix}=n_{i}+p_{i}.
  \end{align*}
To prove this, we denote
  \begin{align*}
    H_{i}=\begin{bmatrix}
        A_{i}-\lambda I & B_{i} \\
        C_{i} & 0
      \end{bmatrix},\quad
    F_{i}=\begin{bmatrix}
        I_{n_{i}} & 0_{n_{i}\times p_{i}} \\
        0_{p_{i}\times n_{i}} & D_{i}
      \end{bmatrix}.
  \end{align*}
Then, $F_{i}\in\mathbb{R}^{(n_{i}+p_{i})\times(n_{i}+p_{i})}$ is nonsingular, $\text{rank}~F_{i}=n_{i}+p_{i}$, $H_{i}\in\mathbb{R}^{(n_{i}+p_{i})\times(n_{i}+p_{i})}$, and
\begin{align*}
  F_{i}H_{i}=\begin{bmatrix}
       A_{i}-\lambda I & B_{i} \\
      D_{i}C_{i} & 0
     \end{bmatrix}.
\end{align*}
By \citet[Corollary 2.5.10]{Berstein2009}, the following inequality holds
\begin{align*}
  \text{rank}~F_{i}+\text{rank}~H_{i}-(n_{i}+p_{i}) & \le\text{rank}~F_{i}H_{i} \\
  & \le \text{min}\{\text{rank}~F_{i},\text{rank}~H_{i}\},
\end{align*}
which gives $n_{i}+p_{i}\le\text{rank}~F_{i}H_{i}\le n_{i}+p_{i}$, and thus $\text{rank}~F_{i}H_{i}=n_{i}+p_{i}$.

Moreover, under  Assumption~\ref{assumption_minimumphase}, for all $\lambda\in\text{spec}({\widetilde{S}})$,
  \begin{align*}
    \text{rank}
    \begin{bmatrix}
      \bar{A}-\lambda I & \bar{B} \\
      \bar{C} & 0
    \end{bmatrix}=\bar{n}+\bar{p},
  \end{align*}
which leads to
  \begin{align*}
    \text{rank}
    \begin{bmatrix}
      \bar{A}-\lambda I & \bar{B} \\
      \bar{R}\bar{C} & 0
    \end{bmatrix}=\bar{n}+\bar{p}.
  \end{align*}

\end{appendices}

\bibliographystyle{plainnat}
\bibliography{referenceGame}
\end{document}